\documentclass[10pt]{amsart}
\usepackage{amsmath, amsthm,amsxtra}
\usepackage[english]{babel}
\usepackage[T1]{fontenc}
\usepackage[latin1]{inputenc}
\usepackage{amssymb}
\usepackage{amscd}
\usepackage{latexsym}
\usepackage{graphicx}
\setcounter{MaxMatrixCols}{30}
\usepackage{mathrsfs} % corsivo
\usepackage[all,cmtip]{xy}

\textwidth 14cm \textheight 21cm

\usepackage{verbatim}
%\usepackage{url}
%\usepackage{svn-multi}
%\usepackage{listings}
%\usepackage{a4wide}

%%% Theorem Like Envirouments

\newtheoremstyle{theorem}%name
  {12pt}          % space above
  {12pt}  % space below
  {\sl}  % bofy font
  {\parindent}     % ident - empty=no indent,  \parindent= paragraph indent
  {\bf}  % thm head font
  {. }    % punctuation after thm head
  { }    % space after thm head: `` ``=normal \newline=linebreak
  {}     % thm head specification
\theoremstyle{theorem}
\newtheorem{theorem}{Theorem}

\newtheorem{remark}[theorem]{Remark}
\newtheorem{proposition}[theorem]{Proposition}
\newtheorem{lemma}[theorem]{Lemma}

\newtheorem{definition}[theorem]{Definition}

\unitlength=1.mm %\special{em:linewidth 0.4pt}
\linethickness{0.5pt}

%******************************************************************
%**************** CAL *********************************************
\newcommand{\ic}{\ensuremath{\mathcal{I}}}

\newcommand{\oc}{\ensuremath{\mathcal{O}}}

\newcommand{\fc}{\ensuremath{\mathcal{F}}}

\newcommand{\ec}{\ensuremath{\mathcal{E}}}

%*******************************************************************
%****************** PROJECTIVE SPACE *******************************

\newcommand{\Pt}{\mathbb{P}^3}

\newcommand{\Ptw}{\mathbb{P}^2}

%********************************************************************
%******************* MATH BB ****************************************

%**********************************************************************
%********************** GRECO *****************************************

\newcommand{\cG}{\gamma}

\newcommand{\lG}{\lambda}

\newcommand{\fG}{\varphi}

%***********************************************************************
%***************VARI****************************************************

 %symbole de Legendre

\newcommand{\bds}{\begin{displaystyle}}
\newcommand{\eds}{\end{displaystyle}}

\begin{document}
\title[Global Tjurina number]{Quasi-complete intersections and global Tjurina number of plane curves.}

\author{Ph. Ellia}
\address{Dipartimento di Matematica e Informatica, Universit\`a degli Studi di Ferrara, Via Machiavelli 30, 44121 Ferrara, Italy.}
\email{phe@unife.it}

\subjclass[2010] {Primary 14H50; Secondary 14M06, 14M07, 13D02} \keywords{quasi complete intersections, codimension two, vector bundle, global Tjurina number, plane curves.}

\begin{abstract} A closed subscheme of codimension two $T \subset \Ptw$ is a quasi complete intersection (q.c.i.) of type $(a,b,c)$ if there exists a surjective morphism $\oc (-a) \oplus \oc (-b) \oplus \oc (-c) \to \ic _T$. We give bounds on $\deg (T)$ in function of $a,b,c$ and $r$, the least degree of a syzygy between the three polynomials defining the q.c.i. (see Theorem \ref{T-the}). As a by-product we recover a theorem of du Plessis-Wall on the global Tjurina number of plane curves (see Theorem \ref{T-dP-Wall}) and some other related results. 
\end{abstract}

\date{\today}

%\begin{document}
%%%%%%%%%%%%%%%%%%%%%%%%%%%%%%%%%%%%%%%%%%%%%%%%%%%%%%%%%
\maketitle

%\tableofcontents

\thispagestyle{empty}

\section{Introduction.}

Let $C \subset \Ptw$ be a reduced, singular curve, of degree $d$, of equation $f=0$. The partials of $f$ determine a morphism: $3.\oc \stackrel{\partial f}{\to} \oc (d-1)$, whose image is a, twisted, ideal sheaf, $\ic _f(d-1)$. From the assumption on $C$, it follows that $\ic _f$ is the ideal sheaf of a zero-dimensional subscheme, $\Sigma$, of $C$, called the \emph{jacobian scheme of $C$}. (Hence $\ic _f = \ic _\Sigma$.) The support of $\Sigma$ is the singular locus of $C$, but the scheme structure is rather mysterious. The \emph{global Tjurina number of $C$} is $\tau (C) := h^0(\oc _\Sigma )$ (we will often write $\tau$ instead of $\tau (C)$ if no confusion can arise).

It is natural to ask for some bound on $\tau$ in function of $d$ (and of some other natural invariants). 

The kernel of the morphism $\partial f$ is a rank two reflexive sheaf, $E_C$. Since we are on a smooth surface, $E_C$ is in fact locally free. Let $r$ denotes the minimal twist of $E_C$ having a section. In other words $r$ is the least degree of a syzygy between the partials of $f$. Then a very nice result of du Plessis-Wall (see Theorem \ref{T-dP-Wall}) gives bounds on $\tau$ in function of $d$ and $r$. This result is proved in the framework of singularity theory. The proof is hard to follow for those who, like me, are far from being experts in this field. So I tried to find another proof. It turns out that the theorem of du Plessis and Wall is a direct consequence of a general statement about quasi-complete intersections in $\Ptw$ (see Theorem \ref{T-the}), whose proof requires only notions of projective geometry (vector bundles, liaison). So, at this point, one doesn't even need to know what is a partial to prove du Plessis-Wall's theorem ! This is amazing and the first reaction is to think, that using the specific assumption (i.e. the three polynomials giving the quasi-complete intersection are the partials of a single polynomial $f$), one could improve the bounds given by the theorem. Alas this is not the case, examples (see \ref{P- bds sharp}) show that, in some sense, du Plessis and Wall's theorem is sharp. This is the content of the first sections of this paper. In the last section, I give some related results, some known, some new, but all in the framework of projective geometry.

%^^^^^^^^^^^^^^^^^^^^^^^^^^^^^^^^^^

\section{Quasi-complete intersections of codimension two in $\Ptw$.}

Let us start with a definition:

\begin{definition} Let $F_a, F_b, F_c \in S := k[x_0, ..., x_n]$ be three homogeneous polynomials of degrees $a, b, c$. The ideal $J = (F_a, F_b, F_c)$ is said to be a \emph{quasi complete intersection} (q.c.i.) if the morphism $\oc (-a) \oplus \oc (-b) \oplus \oc (-c) \stackrel{\rho}{\to} \oc$, defined by these polynomials, has for image the ideal sheaf, $\ic _T$, of a codimension two subscheme $T$.
\end{definition}

\begin{remark} Sometimes one says, and we will do it, that the subscheme $T$ is a q.c.i. of type $(a,b,c)$ if there exists a surjective morphism $\oc (-a) \oplus \oc (-b) \oplus \oc (-c) \stackrel{\rho}{\to} \ic _T$. Observe that $T$ does not determine $(a,b,c)$. For example a point in $\Ptw$ is q.c.i. $(a,b,c)$ for any $c \geq b \geq a \geq 1$. However if $n \geq 3$ and if $T$ is locally a complete intersection (l.c.i.), then it is true that $T$ determines $(a,b,c)$, see Proposition 1 of \cite{BeoE2}.
\end{remark}

The kernel of a surjective morphism $\oc (-a) \oplus \oc (-b) \oplus \oc (-c) \stackrel{\rho}{\to} \ic _T$ is a rank two reflexive sheaf (Prop. 1 of \cite{Ha}), $\fc$. Clearly the graded module $H^0_*(\fc )$ is the module of syzygies between $F_a, F_b$ and $F_c$.

\begin{definition} The q.c.i. $T$ (or better the ideal $J= (F_a, F_b, F_c))$ is said to be an \emph{almost complete intersection} (a.c.i.) if $\fc$ splits: $\fc = \oc (-p) \oplus \oc (-m)$ (i.e. the module $H^0_*(\fc )$ is free).
\end{definition} 

\begin{remark} So $J$ is an a.c.i. if and only if it is saturated ($J = J^{sat} = H^0_*(\ic _T)$).

In terms of $T$ the definition can be unfortunate: one expect a c.i. to be an a.c.i. If $T$ is a point in $\Ptw$ and if $c \geq b \geq 2$, then $T$ is q.c.i. of type $(a,b,c)$ but the corresponding $\fc$ never splits ($J$ is not saturated). So the c.i. $T$ yields a q.c.i. which is not an a.c.i., this can be confusing.

Observe, by the way, that this cannot happen if $n>2$. If $T$ is a c.i. then $\fc$ splits by Horrocks' theorem. (If $n \geq 4$ any q.c.i. $T$ which is integral and subcanonical is a c.i. \cite{BeoE1}).
\end{remark}

From now on we will assume $n = 2$. In this case, since we are on a smooth surface, reflexive implies locally free so $\fc$ is a rank two vector bundle.

\begin{lemma}
\label{L-Chern}
Let $J = (F_a, F_b, F_c) \subset S = k[x,y,z]$ be a q.c.i. ideal of type $(a,b,c)$, $a \leq b \leq c$. So we have an exact sequence:
\begin{equation}
\label{eq.def qci}
0 \to E \to \oc (c-a) \oplus \oc (c-b) \oplus \oc \to \ic _T(c) \to 0
\end{equation}
where $T \subset \Ptw$ is a closed subscheme of codimension two and where $E$ is a rank two vector bundle. 

Then we have $c_1(E) = c-a-b$ and $c_2(E) = ab -t$, where $t := \deg (T) = h^0(\oc _T)$.
\end{lemma}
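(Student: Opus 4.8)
The plan is to exploit the multiplicativity of the total Chern class on the short exact sequence (\ref{eq.def qci}). Working in the ring $H^*(\Ptw) = \bZ[h]/(h^3)$, where $h$ is the hyperplane class, every coherent sheaf on the smooth surface $\Ptw$ admits a finite locally free resolution, so its Chern classes are defined and the relation
\begin{equation*}
c(E)\cdot c(\ic_T(c)) = c\bigl(\oc(c-a)\oplus \oc(c-b) \oplus \oc\bigr)
\end{equation*}
holds. Writing $c(E) = 1 + c_1(E) + c_2(E)$, the goal is then to read off $c_1(E)$ and $c_2(E)$ by comparing the coefficients of $h$ and of $h^2$, once the other two factors have been computed.

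The right-hand factor is immediate from $c(\oc(k)) = 1 + kh$, giving
\begin{equation*}
c\bigl(\oc(c-a)\oplus \oc(c-b) \oplus \oc\bigr) = (1+(c-a)h)(1+(c-b)h) = 1 + (2c-a-b)h + (c-a)(c-b)h^2.
\end{equation*}
For the other factor I would first compute $c(\ic_T)$ from the structure sequence $0 \to \ic_T \to \oc \to \oc_T \to 0$. Since $T$ is zero-dimensional of length $t$, the sheaf $\oc_T$ is supported in dimension zero, so $c_1(\oc_T)=0$ and, via $\mathrm{ch}_2(\oc_T)=t$, one gets $c_2(\oc_T)=-t$; inverting $c(\oc_T)=1-th^2$ yields $c(\ic_T) = 1 + th^2$, i.e. $c_1(\ic_T)=0$ and $c_2(\ic_T)=t$. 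Because $\ic_T$ has rank one, twisting by $\oc(c)$ shifts only $c_1$ (the rank-one twisting rule leaves $c_2$ unchanged), so $c(\ic_T(c)) = 1 + ch + th^2$.

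Substituting into the multiplicativity relation and expanding modulo $h^3$, the coefficient of $h$ forces $c_1(E) = (c-a-b)h$, and the coefficient of $h^2$ gives $c_2(E) = (c-a)(c-b) - t - (c-a-b)c = ab - t$, as claimed. There is no serious obstacle here: this is routine Chern-class calculus. The only points meriting a word of justification are that Chern classes and their multiplicativity extend to the non-locally-free sheaf $\ic_T(c)$ (guaranteed by the smoothness of $\Ptw$, which furnishes a finite locally free resolution) and the rank-one twisting rule leaving $c_2$ fixed, both of which are standard.
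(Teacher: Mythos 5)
Your proof is correct and follows essentially the same route as the paper: both compute the Chern classes of $\oc_T$ as $(0,-t)$, deduce those of $\ic_T(c)$ from the structure sequence, and conclude by Whitney-sum multiplicativity on the defining exact sequence. You merely fill in more of the routine details (the inversion of $c(\oc_T)$ and the explicit coefficient comparison) than the paper does.
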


\begin{proof} If $T \subset \Ptw$ is zero-dimensional, the Chern classes of $\oc _T$ are $(0, -t)$. This can be seen by starting with one point (see Section 2 of \cite{Ha} for similar computations). From the exact sequence $0 \to \ic _T(c) \to \oc (c) \to \oc _T \to 0$, we get that the Chern classes of $\ic _T(c)$ are $(c, t)$. We conclude with the exact sequence (\ref{eq.def qci}).
\end{proof}

For later use we have twisted the previous exact sequence by $c$, so $E = \fc (c)$ with the previous notations.

Here is the main result:

\begin{theorem}
\label{T-the}
Let $J = (F_a, F_b, F_c) \subset S = k[x,y,z]$ be a q.c.i. ideal of type $(a,b,c)$, $a \leq b \leq c$. So we have an exact sequence:
\begin{equation}
\label{eq.the}
0 \to E \to \oc (c-a) \oplus \oc (c-b) \oplus \oc \to \ic _T(c) \to 0
\end{equation}
where $T \subset \Ptw$ is a closed subscheme of codimension two and where $E$ is a rank two vector bundle.

Set $t = h^0(\oc _T)$ and let $r := min\{k \mid h^0(E(k)) \neq 0\}$. Then:

(i) $$c(a + b - c - r) \leq t \leq r^2 + r(c - a -b) + ab$$\\
(ii) If $2r > a + b -c$ then:
$$t \leq r(c - a -b) + ab + r^2 -\frac{(c -a -b + 2r +1)(c - a - b +2r)}{2}.$$
\end{theorem}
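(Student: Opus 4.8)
My plan rests on a single construction common to all three bounds: a section of $E$ of minimal degree. I would choose $0 \neq s \in H^0(E(r))$, which exists by definition of $r$, and note that by minimality its zero scheme $Z$ is $0$-dimensional, since a $1$-dimensional (divisorial) component of $Z$ could be factored out to produce a section of a strictly smaller twist, contradicting the minimality of $r$. Hence $s$ yields
$$0 \to \oc(-r) \to E \to \ic_Z(c-a-b+r) \to 0,$$
with $\ell(Z) = c_2(E(r)) = c_2(E) + r\,c_1(E) + r^2$. Inserting the Chern classes of Lemma \ref{L-Chern}, $c_1(E)=c-a-b$ and $c_2(E)=ab-t$, gives $\ell(Z) = ab - t + r(c-a-b) + r^2$. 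Since $\ell(Z) \geq 0$, this is exactly the upper bound in (i).

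For (ii) I would squeeze $\ell(Z)$ from below. Twisting the sequence above by $-1$ gives $0 \to \oc(-1) \to E(r-1) \to \ic_Z(c-a-b+2r-1) \to 0$; as $h^0(\oc(-1)) = h^1(\oc(-1)) = 0$, the cohomology sequence yields $h^0(\ic_Z(c-a-b+2r-1)) = h^0(E(r-1)) = 0$ by minimality of $r$. Thus $Z$ lies on no curve of degree $m := c-a-b+2r-1$. When $2r > a+b-c$ we have $m \geq 0$, so $H^0(\oc(m)) \hookrightarrow H^0(\oc_Z)$ is injective and $\ell(Z) \geq \binom{m+2}{2} = \binom{c-a-b+2r+1}{2}$. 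Combining with $\ell(Z) = ab - t + r(c-a-b)+r^2$ gives precisely (ii).

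The lower bound in (i) is the part requiring a genuinely new idea, and it is where I would restrict to the curve $D := \{F_c = 0\}$, of degree $c$, which contains $T$. The minimal section $s$ is a syzygy $(g_0,g_1,g_2)$ with $g_0F_a + g_1F_b + g_2F_c = 0$ and $\deg g_0 = c+r-a$, $\deg g_1 = c+r-b$, $\deg g_2 = r$. Restricting to $D$ kills the last term: $g_0|_D\,F_a|_D = -\,g_1|_D\,F_b|_D$ in $H^0(\oc_D(c+r))$. Writing the divisors of zeros on $D$ as $\operatorname{div}(F_a|_D) = T + A'$ and $\operatorname{div}(F_b|_D) = T + B'$, where $T$ is the common part (of degree $t$) and $A', B'$ share no point, the resulting equality $\operatorname{div}(g_0|_D) + A' = \operatorname{div}(g_1|_D) + B'$ forces $\operatorname{div}(g_1|_D) \geq A'$ (the points of $A'$, being disjoint from $B'$, must already lie in $\operatorname{div}(g_1|_D)$). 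Taking degrees, with $\deg \operatorname{div}(g_1|_D) = c\,\deg g_1 = c(c+r-b)$ and $\deg A' = ac - t$, I obtain $c(c+r-b) \geq ac - t$, i.e. $t \geq c(a+b-c-r)$. If $a+b-c-r \leq 0$ the bound is vacuous; note that the trivial Koszul syzygy $(F_b,-F_a,0)$ already forces $r \leq a+b-c$.

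The hard part is the rigour of this last paragraph when $D$ is singular or non-reduced along $T$. The clean divisor arithmetic — the identification of $t = \deg T$ with the degree of the intersection divisor $\inf(\operatorname{div} F_a|_D, \operatorname{div} F_b|_D)$, the splitting into coprime residuals $A', B'$, and the Bezout count $\deg \operatorname{div}(g_1|_D) = c\deg g_1$ — is transparent when $D$ is smooth at the points of $T$, but in general I expect it must be replaced by local length computations in the rings $\oc_{D,P}$, or re-derived through a liaison argument linking $T$ inside the complete intersection cut out by two of the three forms. One also uses tacitly that $F_a, F_b \notin (F_c)$, i.e. $F_a|_D, F_b|_D \neq 0$, which holds for an honest q.c.i.
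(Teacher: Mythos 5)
Your upper bound in (i) and your proof of (ii) are exactly the paper's argument: the minimal section gives $0 \to \oc \to E(r) \to \ic _Z(c-a-b+2r) \to 0$ with $\deg (Z) = c_2(E(r)) = r(c-a-b)+ab-t+r^2 \geq 0$, and when $2r>a+b-c$ the vanishing $h^0(\ic _Z(c-a-b+2r-1)) = h^0(E(r-1)) = 0$ forces $\deg (Z) \geq h^0(\oc (c-a-b+2r-1))$. Nothing to add there.

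For the lower bound your idea is also the paper's idea, but the execution has a genuine gap. Your residual $A'$ is the scheme $\Gamma$ linked to $T$ in the complete intersection $F_a\cap F_c$, and your claim $\mathrm{div}(g_1|_D)\geq A'$ is the statement that $\Gamma$ lies on the curve $g_1=0$ of degree $c-b+r$; the paper obtains this from the mapping-cone sequence $0 \to \oc (r-a) \to E(r) \to \ic _{\Gamma}(c-b+r) \to 0$ (the image of the minimal section in $\ic_\Gamma(c-b+r)$ is precisely your $g_1$), and then gets $ac-t=\deg \Gamma \leq c(c-b+r)$ by placing $\Gamma$ in a complete intersection of type $(c-b+r,c)$. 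The problem you flag in your last paragraph is real: when $D=\{F_c=0\}$ is singular or non-reduced at a point of $T$ or of $A'$, the local rings $\oc _{D,P}$ are not DVRs, so the identification of $T$ with $\inf(\mathrm{div}(F_a|_D),\mathrm{div}(F_b|_D))$, the disjointness of $A'$ and $B'$, and the cancellation $\mathrm{div}(g_0|_D)+A'=\mathrm{div}(g_1|_D)+B' \Rightarrow \mathrm{div}(g_1|_D)\geq A'$ all fail to make sense as stated; this is not a marginal case, since $F_c$ is an arbitrary form. Moreover your tacit hypothesis $F_a|_D\neq 0$ is too weak: you need $F_a$ and $F_c$ to have \emph{no common component} for $\mathrm{div}(F_a|_D)$ to be a divisor of degree $ac$ (the paper arranges this by replacing $F_c$ with a general degree-$c$ element of $J$, which changes neither $E$ nor $r$), and you must also dispose separately of the cases $g_1|_D=0$, $T=F_a\cap F_c$ and $r=a+b-c$ (where the bound is trivially true). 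The repair is exactly the liaison argument you yourself suggest at the end, which is verbatim the paper's proof; as written, your last paragraph is a correct proof only under smoothness assumptions on $D$ along $T\cup A'$ that are not part of the hypotheses.
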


\begin{proof} (i) Since $\ic _T(c)$ is generated by global sections, $T$ is contained in a complete intersection $F_a\cap G_c$ of type $(a,c)$ and we may assume $G_c = F_c$.

If $T = F_a \cap F_c$, then $t = ac, r =b-c$ and $E = \oc (c-b) \oplus \oc (-a)$ and both inequalities are satisfied (in fact they give a single equality !).

So we may assume that $T$ is linked to $\Gamma$ by $F_a \cap F_c$, where $\cG := \deg (\Gamma ) = ac -t > 0$. Now from the resolution (\ref{eq.the}), by mapping cone, taking into account that $E^*(-a) = E(b-c)$ ($E$ has rank two and $c_1(E) = c-a-b$, by Lemma \ref{L-Chern}), we get, after simplifications:
$$0 \to \oc (r-a) \to E(r) \to \ic _{\Gamma}(c-b+r) \to 0$$
Because of the Koszul syzygy: $F_b(F_a) -F_a(F_b) =0$, we have $r \leq a+b-c$. We observe that the inequality $c(a + b - c -r) \leq t$ is clearly satisfied if $r = a+b-c$. Thus from now on we may assume $r < a+b-c \leq a$. The previous exact sequence shows that $h^0(\ic _\Gamma (c-b+r) \neq 0$. So $\Gamma$ is contained in a curve of degree $c-b+r$ and in $F_a \cap F_c$. We have $c-b+r < a \leq c$. Since $h^0(\ic _\Gamma (c)) \geq 2$ and since the base locus of the linear system $|H^0(\ic _\Gamma (c))|$ has dimension zero (because $\dim (F_a \cap F_c) =0$ and $a \leq c$), we conclude that $\Gamma$ is contained in a complete intersection of type $(c-b+r, c)$. It follows that $ac - t = \cG \leq c(c-b+r)$ and we get the lower bound of (i).

By definition we have $h^0(E(r)) \neq 0$ and by minimality of $r$, a non zero section of $E(r)$ vanishes in codimension two or doesn't vanish at all. So we have an exact sequence $0 \to \oc \to E(r) \to \ic _Z(c-a-b+2r) \to 0$, where $Z$ is empty or of codimension two, of degree $c_2(E(r))$. If $Z$ is empty, then $E(r) = \oc \oplus \oc (c-a-b+2r)$. In any case $c_2(E(r)) \geq 0$. From Lemma \ref{L-Chern}, $c_2(E(r)) = r(c-a-b) + ab -t + r^2$ and we get the upper bound of (i). This concludes the proof of (i).

(ii) Consider again the previous exact sequence $0 \to \oc \to E(r) \to \ic _Z(c-a-b+2r) \to 0$. If $Z = \emptyset$, then by minimality of $r$: $c-a-b + 2r \leq 0$, against our assumption. So $Z$ is non empty and $0 = h^0(E(r-1)) = h^0(\ic _Z(c-a-b + 2r -1)$. This implies: $h^0(\oc (c-a-b +2r -1)) \leq \deg (Z) = c_2(E(r))$ and this is the desired inequality.
  
\end{proof}

\begin{remark}
\label{R-E stable}
(i) The bounds in (i) of Theorem \ref{T-the} are sharp in the sense that both inequalities are equalities if $T$ is a complete intersection $(a,c)$ (then $t = ac, r = b-c$).  

(ii) The condition $2r > a+b-c$ is equivalent to require $E$ stable ($E$ is stable if $h^0(E_{norm})=0$, where $E_{norm}$ is the twist $E(m)$ such that $-1 \leq c_1(E(m)) \leq 0$). 
\end{remark}

If $t$ reaches the upper bound in (i) or is right below, we have:

\begin{proposition}
\label{P-t max sub}
With notations as in Theorem \ref{T-the} we have:\\
(i) If $t = r^2 + r(c-a-b) + ab$, then $E = \oc (-r)\oplus \oc (c-a-b+r)$ and $J$ is an a.c.i. (and $2r \leq a+b-c$).\\
(ii) If $t = r^2 + r(c-a-b) + ab -1$, there are exact sequences:
$$0 \to \oc (r+c-a-b-2) \to \oc (-r) \oplus 2.\oc (r+c-a-b-1) \to E \to 0$$
$$0 \to 2.\oc (-c-r+1) \oplus \oc (r-a-b) \to \oc (-c-r+2) \oplus \oc (-a) \oplus \oc (-b) \oplus \oc (-c) \to \ic _T \to 0$$
and $2r \leq a+b-c+1$.
\end{proposition}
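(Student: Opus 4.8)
The plan is to read everything off the exact sequence
$0 \to \oc \to E(r) \to \ic _Z(c-a-b+2r) \to 0$
constructed in the proof of Theorem \ref{T-the}, where $Z$ is empty or of codimension two with $\deg (Z) = c_2(E(r)) = r^2 + r(c-a-b) + ab - t$ (Lemma \ref{L-Chern}). In case (i) this degree is $0$, so $Z = \emptyset$ and the sequence reads $0 \to \oc \to E(r) \to \oc (c-a-b+2r) \to 0$; since $H^1$ of any line bundle on $\Ptw$ vanishes it splits, giving $E(r) = \oc \oplus \oc (c-a-b+2r)$, i.e. $E = \oc (-r) \oplus \oc (c-a-b+r)$. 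Then $\fc = E(-c) = \oc (-c-r) \oplus \oc (r-a-b)$ splits, so $J$ is an a.c.i., and minimality of $r$ forces $h^0(E(r-1)) = 0$, whence $c-a-b+2r-1 < 0$, that is $2r \leq a+b-c$.

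In case (ii) the degree is $1$, so $Z$ is a single reduced point and $\ic _Z$ has the Koszul resolution $0 \to \oc (-2) \to 2.\oc (-1) \to \ic _Z \to 0$. I would feed this, together with the tautological resolution of $\oc$, into the horseshoe lemma applied to the displayed extension; after twisting back by $-r$ this produces the first asserted resolution of $E$. The bound $2r \leq a+b-c+1$ again comes from minimality of $r$: taking $H^0$ of the extension twisted by $-1$ gives $h^0(\ic _Z(c-a-b+2r-1)) = 0$, and for a single point this forces $c-a-b+2r-1 \leq 0$.

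The heart of the matter is the second resolution. Here I would first dualize the resolution of $E$ just obtained. Writing it for $\fc = E(-c)$ as $0 \to \oc (r-a-b-2) \to \oc (-c-r) \oplus 2.\oc (r-a-b-1) \to \fc \to 0$ and applying $\mathcal{H}om(-,\oc )$ (legitimate since $\fc$ is a bundle, so the relevant $\mathcal{E}xt$'s vanish), then twisting by $c_1(\fc ) = -(a+b+c)$ to undo the dual, yields a co-resolution $0 \to \fc \to \oc (r-a-b) \oplus 2.\oc (-c-r+1) \to \oc (-c-r+2) \to 0$; call its middle term $G$. On the other hand, (\ref{eq.the}) twisted by $-c$ gives $0 \to \fc \to N \to \ic _T \to 0$ with $N = \oc (-a) \oplus \oc (-b) \oplus \oc (-c)$. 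I would then form the pushout $P$ of $N \leftarrow \fc \to G$; its two cokernels produce exact sequences $0 \to N \to P \to \oc (-c-r+2) \to 0$ and $0 \to G \to P \to \ic _T \to 0$. The first splits, once more because every $H^1(\oc (k))$ vanishes on $\Ptw$, so $P = N \oplus \oc (-c-r+2)$, and substituting this into the second sequence is exactly the claimed resolution.

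I expect the pushout-and-split step to be the main obstacle, and for a structural reason worth flagging. The naive mapping cone of the resolution of $E$ against (\ref{eq.the}) only yields a length-two resolution built on the three generators $F_a, F_b, F_c$; this reflects the (unsaturated) ideal $J$, not $\ic _T$. One must instead manufacture the extra fourth generator, in degree $c+r-2$, that saturates $J$. Dualizing to obtain the second embedding $\fc \hookrightarrow G$ and combining it with $\fc \hookrightarrow N$ by a pushout is precisely the device that creates this generator, and the vanishing of $H^1$ of line bundles on $\Ptw$ is what makes the auxiliary extension split on the nose. One should also check that the cokernel identifications in the pushout are the asserted ones, but this is routine once the two embeddings of $\fc$ are in hand.
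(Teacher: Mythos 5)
Your proof is correct, and part of it coincides with the paper's while the key step is organized differently. Part (i), the resolution of $E$ in part (ii) obtained by lifting the Koszul resolution of $\ic _Z$ through $0 \to \oc \to E(r) \to \ic _Z(c-a-b+2r) \to 0$ (the horseshoe step), and the derivation of both inequalities from $h^0(E(r-1))=0$ are exactly what the paper does in Lemma \ref{L-c2E(r)=1}. Where you genuinely diverge is the resolution of $\ic _T$: the paper stays inside the liaison framework of the proof of Theorem \ref{T-the}, linking $T$ to $\Gamma$ by the complete intersection $(a,c)$, combining $0 \to \oc (b-a-c) \to E(b-c) \to \ic _\Gamma \to 0$ with the resolution of $E$ to resolve $\ic _\Gamma$, and then applying the mapping cone a second time to return to $\ic _T$; you instead dualize the resolution of $\fc = E(-c)$ to get the co-resolution $0 \to \fc \to G \to \oc (-c-r+2) \to 0$ and push it out against $0 \to \fc \to \oc (-a)\oplus \oc (-b)\oplus \oc (-c) \to \ic _T \to 0$, using $H^1(\oc (k))=0$ to split the auxiliary extension. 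The numerics check out on your side ($c_1(\fc )=-(a+b+c)$, so $\fc ^{\vee}=\fc (a+b+c)$ and the twists come out as stated), and the two cokernel identifications for a pushout of two injections are standard. Your route buys directness -- it bypasses the auxiliary linked scheme $\Gamma$ entirely and makes transparent why a fourth generator in degree $c+r-2$ must appear (your observation that the naive splice only presents the unsaturated ideal $J$ is accurate and is precisely the point) -- whereas the paper's route buys economy, recycling the linkage exact sequence it had already established for the lower bound in Theorem \ref{T-the}.
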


\begin{proof}
(i) If $t = r(c-a-b) + ab +r^2$, then $c_2(E(r)) =0$. Since $h^0(E(r)) \neq 0$, this implies that $E$ splits ($Z = \emptyset$ with notations as above). From the definition of $r$ we have $E = \oc (-r)\oplus \oc (c-a-b+r)$ and $r \leq a+b-c -r$.

(ii) We have $c_2(E(r))=1$ and we conclude with Lemma \ref{L-c2E(r)=1} below.
\end{proof}

\begin{lemma}
\label{L-c2E(r)=1} With notations as above assume $c_2(E(r))=1$, then we have:
$$0 \to \oc (r+c-a-b-2) \to \oc (-r) \oplus 2.\oc (r+c-a-b-1) \to E \to 0$$
$$0 \to 2.\oc (-c-r+1) \oplus \oc (r-a-b) \to \oc (-c-r+2) \oplus \oc (-a) \oplus \oc (-b) \oplus \oc (-c) \to \ic _T \to 0$$
Moreover $2r \leq a+b-c+1$.
\end{lemma}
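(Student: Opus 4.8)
The plan is to use $c_2(E(r))=1$ to pin down $E(r)$ via a single point, and then to extract both resolutions from the Koszul complex of that point and from the liaison already present in the proof of Theorem \ref{T-the}. First I would produce the section. By definition of $r$ there is a nonzero section of $E(r)$; by minimality of $r$ it vanishes in codimension two or nowhere, and since $c_2(E(r))=1>0$ it must vanish, on a scheme $Z$ of length $c_2(E(r))=1$. A length-one subscheme of $\Ptw$ is a reduced point, so with $\delta:=c_1(E(r))=c-a-b+2r$ the section gives
\[ 0\to\oc\to E(r)\to\ic_Z(\delta)\to 0. \]
Twisting by $-1$ and using $H^0(\oc(-1))=H^1(\oc(-1))=0$ yields $H^0(E(r-1))\cong H^0(\ic_Z(\delta-1))$; the left-hand side vanishes by minimality of $r$, while for a single point $H^0(\ic_Z(m))=0$ precisely when $m\le 0$. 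Hence $\delta\le 1$, i.e. $2r\le a+b-c+1$, which is the ``Moreover'' clause.

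For the first sequence I would resolve $E(r)$ by a horseshoe argument on the displayed extension, combining the trivial resolution of $\oc$ with the Koszul resolution $0\to\oc(-2)\to 2.\oc(-1)\to\ic_Z\to 0$ of the point, twisted by $\delta$. This produces $0\to\oc(\delta-2)\to\oc\oplus 2.\oc(\delta-1)\to E(r)\to 0$, and untwisting by $r$ gives exactly the first stated sequence. It is a genuine resolution by construction; moreover it is minimal, the only potential cancellation being the summand $\oc(\delta-2)$ against $\oc$, which is ruled out by $\delta\le 1<2$.

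For the second sequence I would go through liaison. In the setting of Theorem \ref{T-the}, $T$ is linked to a zero-dimensional scheme $\Gamma$ by the complete intersection $F_a\cap F_c$ of type $(a,c)$, and its proof furnishes $0\to\oc(r-a)\to E(r)\to\ic_\Gamma(c-b+r)\to 0$. Lifting the sub-line-bundle $\oc(r-a)$ into $\oc\oplus 2.\oc(\delta-1)$ and forming the induced map turns the first resolution of $E(r)$ into a Hilbert--Burch resolution of $\ic_\Gamma$, which after untwisting reads $0\to\oc(r-a-2)\oplus\oc(b-a-c)\to\oc(b-c-r)\oplus 2.\oc(r-a-1)\to\ic_\Gamma\to 0$. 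I would then apply the standard liaison mapping cone for the complete intersection of type $(a,c)$: dualize the resolution of $I_\Gamma$, twist by $-(a+c)$, adjoin the Koszul generators $\oc(-a)\oplus\oc(-c)$, and cancel the redundant $\oc(-a-c)$. Tracking the twists, the dual of the generator module becomes $2.\oc(-c-r+1)\oplus\oc(r-a-b)$ and the dual of the syzygy module becomes $\oc(-c-r+2)\oplus\oc(-b)$, which together with $\oc(-a)\oplus\oc(-c)$ gives precisely the second stated sequence.

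The hard part will be this last step: carrying out the liaison bookkeeping correctly (the dualizing shift $\omega_X=\oc_X(a+c-3)$, the dual of the resolution of $\Gamma$, and the cancellation of the Koszul summand), and checking exactness and minimality of the outcome. It is worth emphasizing why one cannot shortcut this: the naive mapping cone of (\ref{eq.the}) with the resolution of $E$ resolves the \emph{unsaturated} ideal $J=(F_a,F_b,F_c)$, not $I_T=J^{sat}$, the two differing by the finite-length module $H^1_*(E)(-c)$. The extra generator of degree $c+r-2$ appearing in the second sequence is exactly the one created by saturation, and it is the detour through $\Gamma$ and liaison that accounts for it.
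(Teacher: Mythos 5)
Your proposal is correct and follows essentially the same route as the paper's proof: a section of $E(r)$ vanishing at a single point, the induced three-term resolution of $E$, and then the detour through the scheme $\Gamma$ linked to $T$ by the complete intersection $(a,c)$ followed by the liaison mapping cone to obtain the resolution of $\ic_T$, with the inequality $2r \leq a+b-c+1$ coming from $h^0(E(r-1))=0$. Your closing remark on why the naive mapping cone only resolves the unsaturated ideal $J$ is a correct and useful observation, but it does not change the fact that the argument is the paper's.
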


\begin{proof} By minimality of $r$, $E(r)$ has a section vanishing along one point: $0 \to \oc \to E(r) \to \ic _p(c-a-b+2r) \to 0$. The resolution of $\ic _p$ yields a surjective morphism: $2.\oc (c-a-b+2r-1) \to \ic _p(c-a-b+2r)$. This morphism can be lifted to $E(r)$ and by completing the diagram we get:
$$0 \to \oc (c-a-b+2r-2) \to \oc \oplus 2.\oc (c-a-b+2r-1) \to E(r) \to 0$$
We observe that $T$ is not a complete intersection $(a,c)$. Indeed if it were we would have $r=c-b$ and $E(r) = \oc \oplus \oc (b-c-a)$ which has $c_2=0$. As seen in the proof of Theorem \ref{T-the}, $T$ is linked to $\Gamma$ by a complete intersection $(a,c)$ and we have $0 \to \oc (b-a-c) \to E(-c+b) \to \ic _\Gamma \to 0$. Combining with the resolution of $E$ found before we get:
$$0 \to \oc (-a+r-2) \oplus \oc (b-a-c) \to \oc (-r-c+b) \oplus 2.\oc (-a+r-1) \to \ic _\Gamma \to 0$$
Now by mapping cone, using again the complete intersection $(a,c)$, we get the resolution of $\ic _T$.

The last inequality follows from the definition of $r$ ($h^0(E(r-1))=0$). 
\end{proof}

\begin{remark}
\label{R-1 2 pts}Of course we have a similar statement if $c_2(E(r))=2$ (or if we know the minimal free resolution of $\ic _Z$).
\end{remark} 

With our notations we have $a-c \leq r \leq a-c+b$.

\begin{proposition}
\label{P-r=a-c+1}
(i) With notations as above, if $r=a-c$, then $T$ is a complete intersection of type $(a,c)$.\\
(ii) If $r = a-c+1$, then there are three cases:\\
1) $a=b$, $t=c(a-1)$ and $E(r)$ has a scetion vanishing at one point.\\
2) $a=b$, $t = c(a-1)+1$ and $E$ splits.\\
3) $b = a+1$, $t = ac$ and $E(r)$ has a section vanishing at one point.\\
When $E(r)$ has a section vanishing at one point, we get the resolutions of $E$ and $\ic _T$ from Lemma \ref{L-c2E(r)=1}.
\end{proposition}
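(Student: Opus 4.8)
The plan is to route everything through the two exact sequences already produced in the proof of Theorem~\ref{T-the}. The first is the section sequence
$$0 \to \oc \to E(r) \to \ic_Z(c-a-b+2r) \to 0,$$
where, by minimality of $r$, $Z$ is empty or of codimension two with $\deg Z = c_2(E(r))$. The second, available precisely when $T$ is not a complete intersection of type $(a,c)$, is the liaison sequence
$$0 \to \oc(r-a) \to E(r) \to \ic_\Gamma(c-b+r) \to 0,$$
with $\Gamma$ the residual of $T$ in $F_a\cap F_c$, so $\deg\Gamma = ac-t$; throughout I use $c_2(E(r)) = r^2+r(c-a-b)+ab-t$ (Lemma~\ref{L-Chern}). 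For (i), put $r=a-c$ and suppose $T$ were not the complete intersection, so $\Gamma\neq\emptyset$. The liaison sequence becomes $0 \to \oc(-c) \to E(a-c) \to \ic_\Gamma(a-b) \to 0$, whence $h^0(E(r)) \le h^0(\ic_\Gamma(a-b)) = 0$ (as $a\le b$ and $\Gamma\neq\emptyset$), contradicting the definition of $r$. Hence $T$ is a complete intersection of type $(a,c)$.

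For (ii), put $r=a-c+1$ and first determine $b$. If $T$ is not a complete intersection, the liaison sequence reads $0 \to \oc(1-c) \to E(r) \to \ic_\Gamma(a-b+1) \to 0$, forcing $h^0(\ic_\Gamma(a-b+1))\neq0$, which is impossible for a nonempty $\Gamma$ once $a-b+1\le0$; thus $b=a$ (and $\Gamma$ then lies on a line). If instead $T$ is a complete intersection, then $r=b-c$ gives $b=a+1$, $t=ac$ and $E=\oc(c-b)\oplus\oc(-a)$ splits; this is case 3 -- here $c_2(E(r))=0$, so the section of $E(r)$ does not vanish and the statement should read ``$E$ splits''. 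Conversely $b=a$ excludes the complete intersection (which would give $r=a-c$), so it remains to analyse $b=a$.

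Assume $b=a$. Twisting the defining sequence (\ref{eq.the}) by $\oc(r)$ embeds $E(r)\hookrightarrow \oc(1)\oplus\oc(1)\oplus\oc(a-c+1)$, so a minimal section is a triple $(\ell_1,\ell_2,s)$ of forms of degrees $1,1,a-c+1$, and its zero-scheme satisfies $Z\subseteq\{\ell_1=\ell_2=0\}$. If $\ell_1,\ell_2$ are independent this is a single reduced point, so $\deg Z\le1$; if they are proportional, then either $s\neq0$ and the section is nowhere zero ($Z=\emptyset$), or $s=0$ and the section is divisible by a linear form, contradicting minimality of $r$. Hence $c_2(E(r))=\deg Z\in\{0,1\}$. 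When $c_2(E(r))=1$, Lemma~\ref{L-Chern} gives $t=c(a-1)$ and $E(r)$ has a section vanishing at one point (case 1), the resolutions being furnished by Lemma~\ref{L-c2E(r)=1}; when $c_2(E(r))=0$, Proposition~\ref{P-t max sub}(i) shows $E$ splits and $t=c(a-1)+1$ (case 2).

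I expect the main obstacle to be the bound $c_2(E(r))\le1$ in the case $b=a$: the whole argument rests on reading off from the twisted defining sequence that a minimal section has two linear components, which already cut out at most a point, together with the minimality of $r$ to rule out proportionality. A secondary point requiring care is the case $b=a+1$, where one must notice that $r=a-c+1$ forces $T$ to be the complete intersection, hence $E$ to split, rather than to carry a section vanishing at a point.
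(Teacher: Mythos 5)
Your overall strategy is sound and in places genuinely different from the paper's. For (i) you argue through the liaison sequence $0 \to \oc(r-a) \to E(r) \to \ic_\Gamma(c-b+r) \to 0$, whereas the paper simply observes that a section of $E(a-c) \subset \oc\oplus\oc(a-b)\oplus\oc(a-c)$ has constant (or zero) components and so forces a linear dependence among the $F_i$; both work. For (ii) the paper obtains $b\le a+1$ and then $c_2(E(r))\in\{0,1\}$ by comparing the two bounds of Theorem \ref{T-the}(i), while you split according to whether $T$ is a complete intersection and, when $b=a$, bound $c_2(E(r))$ by inspecting the shape of a minimal syzygy. You have also correctly spotted that case 3) of the statement is a slip: with $b=a+1$, $t=ac$, $r=a-c+1$, the paper's own formula $c_2(E(a-c+1))=a-b-c+cb+1-t$ evaluates to $0$, not $1$, so $E$ splits ($T$ is the complete intersection) and no section of $E(r)$ vanishes at a point; the sentence ``the same happens if $b=a+1$, $t=ac$'' in the paper's proof is an arithmetic error, and your correction is right.

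The one step that needs repair is the sub-case of proportional $\ell_1,\ell_2$ in your bound $c_2(E(r))\le 1$. It is not true that $s\neq 0$ forces the section to be nowhere zero: when $a=c$ (hence $a=b=c$ and $r=1$, precisely the case relevant to jacobian ideals) $s$ is a linear form, and if $s$ is independent of $\ell$ then $V(\ell_1,\ell_2,s)=V(\ell,s)$ is a point which $Z$ may well equal. Moreover, when $s\neq 0$ but $\ell\mid s$ the section is again divisible by $\ell$, a case your dichotomy omits. The correct split is: if $\ell\mid s$ (including $s=0$) the section is divisible by $\ell$, contradicting $h^0(E(r-1))=0$; if $\ell\nmid s$ then, since $\deg s=a-c+1\le 1$, $V(\ell,s)$ has length at most $1$, so $\deg Z\le 1$ anyway. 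With this correction your conclusion $c_2(E(r))\in\{0,1\}$ stands, and the rest (reading off $t$ from $c_2(E(r))=c(a-1)+1-t$, then invoking Proposition \ref{P-t max sub} and Lemma \ref{L-c2E(r)=1}) is fine. Alternatively you could have quoted Theorem \ref{T-the}(i) directly, as the paper does: with $b=a$ and $r=a-c+1$ its two bounds give $c(a-1)\le t\le c(a-1)+1$ at once.
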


\begin{proof} (i) We have $0 \to E(a-c) \to \oc \oplus \oc (a-b) \oplus \oc (a-c) \to \ic _T(a) \to 0$, with $h^0(E(a-c)) \neq 0$. If $b > a$, then $\lG F_a =0$ for some $\lG \neq 0$, which is absurd. Hence $a=b$. If $c > a$, then $F_a$ and $F_b$ are linearly dependent and $T = F_a \cap F_c$. If $a=b=c$, then $F_a, F_b, F_c$ are linearly dependent, one of them is a linear combination of the other two and $T$ is the complete intersection of these two curves.\\
(ii) First observe that necessarily $b \leq a+1$. We have $c_2(E(a-c+1)) = a-b-c + cb + 1 -t$. From (i) of Theorem \ref{T-the} we have: $c(a-1) \leq t \leq c(a-1)+1$, if $a=b$ and: $t=ac$ if $b=a+1$.

If $a=b$ and $t = c(a-1)$, then $c_2(E(a-c+1))=1$ and $E(r)$ has a section vanishing at one point. The same happens if $b=a+1$, $t=ac$.

If $a=b$ and $t = c(a-1)+1$, then $c_2(E(a-c+1))=0$ and $E$ splits.
\end{proof}

Now we give criteria for $J$ to be an a.c.i. ideal.

\begin{proposition}
\label{P-Jaci}
With the notations of Theorem \ref{T-the}, we have:\\
(i) $E$ splits if and only if $h^1(E(m_0)) =0$, where $m_0 = \lfloor \frac{(a+b-c-1)}{2}\rfloor$\\
(ii) If $H^0_*(E)$ has two generators of degrees $r_1, r_2$, with $r_1 + r_2 \leq a+b-c$, then $E$ splits.
\end{proposition}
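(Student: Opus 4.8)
The plan is to reduce both parts to the basic fact that a rank two bundle on $\Ptw$ splits as soon as it has no intermediate cohomology, i.e. $h^1(E(k))=0$ for all $k$ (Horrocks' criterion, which is elementary on a surface). Both ``only if'' implications are then immediate, so the work lies in the two ``if'' directions.

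For (i) I would first exploit self-duality. Since $E$ has rank two and $c_1(E)=c-a-b$ (Lemma \ref{L-Chern}), we have $E^*\cong E(a+b-c)$, and Serre duality on $\Ptw$ gives $h^1(E(k))=h^1(E(a+b-c-3-k))$. Thus $k\mapsto h^1(E(k))$ is symmetric about $(a+b-c-3)/2$, and $m_0$ together with its mirror $a+b-c-3-m_0$ are the two central degrees. Next I would restrict $E$ to a general line $L$, writing $E|_L=\oc_L(d_1)\oplus\oc_L(d_2)$ with $d_1\le d_2$ and $d_1+d_2=c-a-b$. From $0\to E(k)\stackrel{\ell}{\to}E(k+1)\to E(k+1)|_L\to 0$ one reads off that multiplication by $\ell$ on $H^1$ is injective for $k\le -d_2-2$ and surjective for $k\ge -d_1-2$; hence $h^1(E(k))$ is non-decreasing and then non-increasing in $k$, with its maximum at the centre. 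Combining this unimodality with the symmetry, the single hypothesis $h^1(E(m_0))=0$ forces the mirror value to vanish too and then propagates the vanishing outward, giving $h^1(E(k))=0$ for every $k$ and therefore splitting.

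The step I expect to be the genuine obstacle is the behaviour exactly at the centre when $a+b-c$ is odd. In that case $m_0$ and its mirror lie one step on either side of the self-paired central degree $(a+b-c-3)/2$, so the argument above only yields vanishing away from that one degree; an isolated non-zero central class is a priori allowed for a rank two bundle (the cotangent bundle $\Omega^1_{\Ptw}$ is precisely such an example, with $h^1$ concentrated in a single twist). To close this gap I would use the q.c.i.\ structure rather than generic bundle theory: the section of $E(r)$ constructed in the proof of Theorem \ref{T-the}, together with the bound $r\le a+b-c$ and the fact that $T$ is a genuine codimension two scheme, should rule out an isolated central class, so that vanishing at $m_0$ does imply $h^1_*(E)=0$. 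Making this exclusion precise is the crux of (i).

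For (ii) I would argue directly and avoid cohomology. Let $s_1,s_2$ be minimal generators of $H^0_*(E)$ of degrees $r_1\le r_2$. The section $s_1$ yields $0\to\oc(-r_1)\to E\to\ic_Z(c-a-b+r_1)\to 0$ with $Z$ of codimension two (or empty) and $\deg Z=c_2(E(r_1))$; if $Z=\emptyset$ then $E$ splits and we are done. Otherwise, composing $s_2\colon\oc(-r_2)\to E$ with the projection onto $\ic_Z(c-a-b+r_1)$ produces a section of $\ic_Z(c-a-b+r_1+r_2)$ which is non-zero precisely because $s_2$ is not a multiple of $s_1$ (minimality). But $c-a-b+r_1+r_2=r_1+r_2-(a+b-c)\le 0$ by hypothesis, and a non-empty $Z$ admits no non-zero section of $\ic_Z$ in non-positive degree; this contradiction forces $Z=\emptyset$, hence $E$ splits. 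Here the only thing to check carefully is the non-vanishing of the image of $s_2$, which is exactly the minimal-generation hypothesis.
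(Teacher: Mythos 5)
Your proof of part (ii) is correct and is essentially the paper's own argument: the paper also starts from the minimal section, obtains $0 \to \oc \to E(r) \to \ic _Z(c-a-b+2r) \to 0$, notes that $h^0(E(r))=1$ when $Z \neq \emptyset$ (because $c-a-b+2r \leq 0$), and observes that the second generator must induce a non-zero section of $\ic _Z$ in some strictly positive twist, forcing $r_2 \geq a+b-c-r_1+1$ and contradicting $r_1+r_2 \leq a+b-c$. Your formulation via the composite $\oc (-r_2) \to E \to \ic _Z(c-a-b+r_1)$, non-zero by minimality, is the same computation.

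For part (i) the paper's proof is a one-line citation: a normalized rank two bundle $\ec$ on $\Ptw$ (i.e. $-1 \leq c_1(\ec ) \leq 0$) splits if and only if $h^1(\ec (-1))=0$ (see \cite{phe}), applied to the normalized twist of $E$. You instead try to reprove this criterion from scratch, and you candidly leave open the decisive point: the possible isolated non-zero $h^1$ at the self-dual central twist when $a+b-c$ is odd. That is a genuine gap, so your proof of (i) is incomplete --- but your diagnosis is sharp and in fact puts a finger on an issue with the statement itself. When $a+b-c$ is odd the normalized twist is $\ec = E(\tfrac{a+b-c-1}{2})$, with $c_1(\ec )=-1$, and the cited criterion tests $h^1$ at $\ec (-1) = E(\tfrac{a+b-c-3}{2})$, the self-paired degree; the proposition's $m_0 = \tfrac{a+b-c-1}{2}$ sits one step above it. Vanishing there (equivalently, by duality, at $\tfrac{a+b-c-5}{2}$) does not by itself force vanishing at the centre: $\ec = \Omega ^1_{\Ptw}(1)$ has $h^1(\ec )=0$ yet $h^1(\ec (-1))=1$ and does not split. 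So either one reads $m_0$ as the twist making $E(m_0+1)$ normalized (which agrees with the stated formula only when $a+b-c$ is even), or one must supply exactly the supplementary argument you ask for, excluding bundles with $h^1$ concentrated in the single central degree from arising as kernels of q.c.i.'s. Neither your sketch nor the paper does this; the paper simply does not confront the odd case.

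Two smaller points on your route to the criterion. The restriction-to-a-line argument only gives injectivity of multiplication by $\ell$ on $H^1$ for $k \leq -d_2-2$ and surjectivity for $k \geq -d_1-2$; in the intermediate range nothing is asserted, so ``non-decreasing then non-increasing with maximum at the centre'' needs the extra input that the generic splitting type is balanced (Grauert--M\"ulich in the semistable case) together with a separate, easier, treatment of the unstable case. And even granting unimodality, propagating the vanishing outward from $m_0$ and its mirror uses those injectivity/surjectivity ranges, so you would still have to check that $m_0$ lies in them. All of this is bypassed by invoking the splitting criterion of \cite{phe} at the correct central twist, which is what the paper does.
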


\begin{proof} (i) This follows from the fact (see \cite{phe}) that a normalized rank two vector bundle, $\ec$, on $\Ptw$ splits if and only if $h^1(\ec (-1))=0$ ($\ec$ is normalized if $-1 \leq c_1(\ec ) \leq 0$).

(ii) Assume $r_1 \leq r_2$. Of course $r \leq r_1$. Consider the exact sequence $0 \to \oc \to E(r) \to \ic _Z(c-a-b+2r) \to 0$. If $Z = \emptyset$, we are done. Otherwise, since $2r \leq a+b-c$, we have $h^0(E(r))=1$. The next generator will come from a section of some twist $\ic _Z(m)$, $m > 0$. The first possibility is $0 \to \oc (a+b-c -2r+1) \to E(a+b-c-r+1) \to \ic _Z(1) \to 0$. It follows that $r_2 \geq -r +a +b -c +1 \geq -r_1 + a +b -c +1$. This implies $r_1 + r_2 > a +b -c$, contradiction. Hence $Z= \emptyset$ and $E$ splits. 
\end{proof}

\begin{remark} Condition (ii) can be useful when doing explicit computations. It was first proved in \cite{Simis} in the case $a = b =c$, by a different method.
\end{remark}

%**********************************
\section{Global Tjurina number of plane curves.}

Let $C \subset \Ptw$ be a reduced, singular curve, of degree $d$, of equation $f=0$. The partials of $f$ determine an exact sequence:

\begin{equation}
\label{eq:C}
0 \to E_C \to 3.\oc \stackrel{\partial f}{\to} \ic _\Sigma (d-1) \to 0
\end{equation}

The codimension two subscheme $\Sigma$ is the \emph{jacobian singular scheme of $C$}.

\begin{definition} The global Tjurina number of $C$, $\tau(C)$, is defined by $\tau (C) := h^0(\oc _\Sigma )$. (If no confusion can arise we will just write $\tau$ instead of $\tau (C)$).

Let $r := min \{k \mid h^0(E_C(k)) \neq 0\}$. In other words $r$ is the minimal degree of a syzygy between $f_x, f_y, f_z$.
\end{definition}

From Lemma \ref{L-Chern} we get: 

\begin{lemma}
With notations as above $c_1(E_C) = -d+1, c_2(E_C) = -\tau +(d-1)^2$ ($d = \deg (C)$, $\tau = h^0(\oc _{\Sigma})$.
\end{lemma}

Let us first recall the following well known (and easy to prove) fact:

\begin{lemma}
\label{L-partialsDep} Let $C \subset \Ptw$ be a plane curve of equation $f=0$. The partial $f_x, f_y, f_z$ are linearly dependent if and only if $C$ is a set of lines through a point.
\end{lemma}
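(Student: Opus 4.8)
The plan is to prove both implications by using a linear change of coordinates to reduce to the case where one partial derivative vanishes identically. Throughout I would work over the base field $k$ (algebraically closed, characteristic zero), so that the vanishing of a directional derivative forces genuine independence of a variable and so that homogeneous forms in two variables split completely into linear factors. The starting remark, which I would record first, is that linear dependence of $f_x, f_y, f_z$ is invariant under a linear change of coordinates: if $g = f \circ A$ with $A$ invertible, then by the chain rule $\nabla g = A^{T}\,(\nabla f)\circ A$, and since $A^{T}$ is invertible the partials of $g$ are linearly dependent if and only if those of $f$ are.

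For the easy implication, suppose $C$ is a union of lines through a point $p$. After applying a suitable $A$ I may assume $p = [0:0:1]$. Then every line through $p$ has an equation of the form $\alpha x + \beta y$, so $f$ is a product of such forms and lies in $k[x,y]$. Hence $f_z = 0$ identically, and the partials are trivially linearly dependent.

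For the converse, suppose there is a nonzero vector $\mathbf{c} = (\lambda,\mu,\nu)$ with $\lambda f_x + \mu f_y + \nu f_z = 0$. The point I would exploit is that this relation is exactly the directional derivative of $f$ along $\mathbf{c}$, and that $\mathbf{c}$ can be straightened to a coordinate direction. I would choose an invertible matrix $A$ whose third column equals $\mathbf{c}$ and set $g := f \circ A$; the chain rule then gives $g_z = (A e_3)^{T}\,(\nabla f)\circ A = \mathbf{c}^{T}\,(\nabla f)\circ A = 0$. Thus $g$ is a homogeneous form of degree $d$ with $g_z = 0$, so $g \in k[x,y]$, and as a binary form it factors as a product of linear forms $\alpha_i x + \beta_i y$. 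Therefore $\{g = 0\}$ is a union of $d$ lines through $[0:0:1]$, and transporting back by $A$ shows $C = \{f=0\}$ is a union of lines through the point $[\mathbf{c}] = [\lambda:\mu:\nu]$.

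The only genuinely delicate step is the implication $g_z = 0 \Rightarrow g \in k[x,y]$, which relies on characteristic zero: in characteristic $p$ one would only conclude $g \in k[x,y,z^{p}]$, and the conclusion could fail. Since the surrounding singularity-theoretic framework is set over a field where this is harmless, I do not expect this to cause any difficulty; everything else is routine linear algebra together with the splitting of binary forms over an algebraically closed field.
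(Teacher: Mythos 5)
Your proof is correct. The paper states this lemma as ``well known (and easy to prove)'' and omits the proof entirely, so there is nothing to compare against; your argument --- straightening the dependency vector $(\lambda,\mu,\nu)$ to the coordinate direction $e_3$ via the chain rule, concluding $g\in k[x,y]$ in characteristic zero, and factoring the resulting binary form into linear factors --- is exactly the standard argument the author is implicitly invoking, and your remark that the implication $g_z=0\Rightarrow g\in k[x,y]$ is where characteristic zero enters is the one genuinely non-vacuous point.
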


\begin{remark}
\label{R-partialsDep} If $C$ is a set of $d$ distinct lines through a point, then $\Sigma$ is the complete intersection of two partials and $\tau (C)=(d-1)^2$. Indeed since $C$ is reduced there exist two linearly independent partials and they don't share any common component. Moreover in this case it is easy to see that $E_C = \oc \oplus \oc (-d+1)$ (combine the exact sequence $0 \to \oc (-2d+2) \to 2.\oc (-d+1) \to \ic _\Sigma \to 0$ with the exact sequence defining $E_C$).
\end{remark}

Clearly we have $r \leq d-1$ (Koszul relations). The case $r=0$ is settled by Lemma \ref{L-partialsDep}, hence in the sequel we will assume $1 \leq r \leq d-1$. 

The following definition goes back to Saito \cite{Saito}:

\begin{definition} With notations as above, the divisor $C \subset \Ptw$ is said to be \emph{free} if $E_C = \oc (-a)\oplus \oc (-b)$. In this case $(a,b), a \leq b,$ is called the \emph{exponent} of $C$.
\end{definition}

\begin{remark}
\label{R-free} (i) If $E_C$ splits then, from the definition of $r$, $E_C = \oc (-r)\oplus \oc (-d+1+r)$. In this case $c_2(E_C) = (d-1)^2 -\tau = -r(-d+1-r)$. It follows that $\tau = (d-1)(d-1-r)+r^2$. If $-r(-d+1-r)=0$, then $\tau = (d-1)^2$ and $\Sigma$ is a complete intersection $(d-1, d-1)$. This implies $r=0$ hence $C$ is a set of lines through a point.

(ii) If $a,b$ are two integers such that $a+b = d-1$, then the maximal value of $ab$ is $(d-1)^2/4$ if $d$ is odd and $(d-2)d/4$ if $d$ is even. It follows that $(d-1)^2 - \tau \leq (d-1)^2/4$ if $d$ is odd, i.e. $\tau \geq 3(d-1)^2/4$; if $d$ is even we get: $\tau \geq 1 + \frac{3d(d-2)}{4}$. This has been already observed in \cite{Di-MaxTju}. So free curves have a \emph{big} global Tjurina number (see also the proof of Proposition \ref{P-last}).

(iii) Since a stable vector bundle is indecomposable, if $C$ is free $2r + 1 \leq d$ and the exponent is $(r, d-1-r)$.

(iv) An important fact about free curves is that they exist! Indeed, as proved in \cite{DS-expFree}, for every $d \geq 3$ and any $r$, $1 \leq r < d/2$, there exists a free curve with exponent $(r, d-1-r)$.    
\end{remark}

\section{Bounds on $\tau (C)$ and a theorem of du Plessis-Wall.}

It is natural to ask for a bound of $\tau$ in function of $d$ and other invariants of $C$. The following result has been proved by du Plessis and Wall (see Theorem 3.2 of \cite{dP-Wall}). 

\begin{theorem} \emph{(du Plessis-Wall)}\\
\label{T-dP-Wall}
Let $C \subset \Ptw$ be a reduced, singular curve of degree $d$.\\ (i) Then:
\begin{equation}
(d-1)(d-r-1) \leq \tau \leq (d-1)(d-r-1) + r^2
\end{equation}
(ii) If, moreover, $2r+1 > d$, then:
\begin{equation}
\tau \leq (d-1)(d-r-1)+r^2 - \frac{1}{2}(2r +1 -d)(2r +2-d)
\end{equation}
\end{theorem}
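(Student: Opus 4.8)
The plan is to apply Theorem \ref{T-the} directly to the q.c.i. furnished by the three partials $f_x, f_y, f_z$, which all have degree $d-1$, so that the type is $(a,b,c) = (d-1,d-1,d-1)$. First I would observe that the exact sequence (\ref{eq:C}) already exhibits $(f_x,f_y,f_z)$ as such a q.c.i.: the morphism $\partial f$ has image $\ic_\Sigma(d-1)$, and $\Sigma$ is zero-dimensional, hence of codimension two, because $C$ is reduced. Comparing (\ref{eq:C}) with the defining sequence (\ref{eq.the}) of Theorem \ref{T-the} in the case $a=b=c=d-1$, where the twists $\oc(c-a)$ and $\oc(c-b)$ both collapse to $\oc$, one sees that the bundle $E$ of the theorem is exactly $E_C$ and that the two definitions of $r$ coincide. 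So I may set $t=\tau$, $T=\Sigma$ and $a=b=c=d-1$ throughout.

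The remaining work is a substitution, keeping in mind that $c-a-b = -(d-1)$. The lower bound $c(a+b-c-r)$ of part (i) becomes $(d-1)(d-1-r) = (d-1)(d-r-1)$, while the upper bound $r^2 + r(c-a-b) + ab$ becomes $r^2 - r(d-1) + (d-1)^2 = (d-1)(d-r-1) + r^2$; these are precisely the two inequalities of (i) in du Plessis--Wall. For part (ii) the hypothesis $2r > a+b-c$ reads $2r > d-1$, i.e. $2r+1 > d$, and the bound
$$r(c-a-b) + ab + r^2 - \frac{(c-a-b+2r+1)(c-a-b+2r)}{2}$$
specializes, after replacing $c-a-b$ by $-(d-1)$, to
$$(d-1)(d-r-1) + r^2 - \frac{(2r+2-d)(2r+1-d)}{2},$$
which is the asserted refinement.

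One boundary case deserves a word. When $r=0$ the partials are linearly dependent, so by Lemma \ref{L-partialsDep} the curve $C$ is a union of lines through a point and $\tau=(d-1)^2$ (Remark \ref{R-partialsDep}); this agrees with both displayed bounds of (i), which collapse to the single value $(d-1)^2$, so the statement holds, and for $1 \leq r \leq d-1$ the argument above applies verbatim. I do not expect any genuine obstacle: essentially all the content has been absorbed into Theorem \ref{T-the}, and what remains is only to check that the combinatorial specialization reproduces the du Plessis--Wall expressions. The single point needing mild care is the sign bookkeeping in $c-a-b = -(d-1)$, which must be tracked consistently through both the quadratic upper bound and the subtracted triangular term appearing in (ii).
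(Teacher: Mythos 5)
Your proposal is correct and is exactly the paper's argument: the paper proves Theorem \ref{T-dP-Wall} by the one-line substitution $a=b=c=d-1$, $t=\tau$ in Theorem \ref{T-the}, which is precisely what you carry out (your explicit algebra and the remark on the $r=0$ boundary case are just a more careful writing-out of the same step).
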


\begin{proof} Just put $a=b=c =d-1$ and $t=\tau$ in Theorem \ref{T-the}.
\end{proof}

Let us see that the bounds in the first part of the theorem are sharp. 

\begin{lemma}
\label{L- t=(d-1)(d-1-r)}
For every $d \geq 2$, there exists a curve of degree $d$ with $\tau = (d-1)(d-1-r)$.
\end{lemma}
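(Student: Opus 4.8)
The plan is to prove sharpness by exhibiting, for each $d$, a single reduced \emph{singular} curve for which the left inequality of Theorem~\ref{T-the}(i) (with $a=b=c=d-1$) is an equality. Equality there forces, in the liaison picture used in the proof of Theorem~\ref{T-the}, the residual scheme $\Gamma$ of $\Sigma$ to be a complete intersection of type $(r,d-1)$, so it is natural to look for a curve whose jacobian scheme is as degenerate as possible. The most economical \emph{nondegenerate} case is $r=1$, where the admissible interval $[(d-1)(d-2),\,(d-1)(d-2)+1]$ has length one; so I would produce a curve carrying exactly one low-degree syzygy and whose global Tjurina number equals $(d-1)(d-2)$.

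For $d\ge 3$ I would take the generalized cusp
\[
C:\quad f = x^d - y^{d-1}z = 0 .
\]
First I would check that $C$ is reduced and irreducible: in the chart $y\neq 0$ it is the smooth graph $z=x^d$, hence reduced and irreducible there, and the only point of $C$ with $y=0$ is $p=[0:0:1]$, so $f$ is squarefree and irreducible of degree $d$. Inspecting the partials $f_x=d\,x^{d-1}$, $f_y=-(d-1)y^{d-2}z$, $f_z=-y^{d-1}$ shows at once that $\Sigma=\{f_x=f_y=f_z=0\}$ is supported only at $p$, so $C$ has a single singular point.

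Next I would compute the two invariants. Localizing at $p$ (chart $z=1$) gives the quasi-homogeneous singularity $x^d-y^{d-1}$, whose Milnor and Tjurina numbers coincide and equal $\dim_k k[[x,y]]/(x^{d-1},y^{d-2})=(d-1)(d-2)$; since $p$ is the only singular point and (in characteristic $0$) $f\in(f_x,f_y,f_z)$ by Euler's relation, we get $\tau = h^0(\oc_\Sigma)=(d-1)(d-2)$. For $r$, I would exhibit the linear syzygy
\[
y\,f_y-(d-1)\,z\,f_z=0,
\]
i.e. a nonzero section of $E_C(1)$, which gives $r\le 1$; and since $f_x,f_y,f_z$ are three \emph{distinct} monomials they are linearly independent, so there is no constant syzygy and, by Lemma~\ref{L-partialsDep}, $C$ is not a union of lines through a point, whence $r\ge 1$. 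Therefore $r=1$ and $\tau=(d-1)(d-2)=(d-1)(d-1-r)$, as required. The remaining value $d=2$ is settled by two distinct lines: this is the case $r=0$, $\tau=(d-1)^2$ of Remark~\ref{R-partialsDep}, which again realizes the bound.

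All the verifications above are routine; the only point that must be handled with care is the evaluation of $r$, namely certifying that no syzygy of degree $<1$ exists, which here is immediate because the partials are monomials. I would also flag honestly that this family realizes the lower bound only for $r=1$ (and $r=0$): if one wanted the stronger assertion that the bound is attained for \emph{every} admissible $r$, a single family would not suffice — generic line arrangements, for example, meet it only for even $d$ — and one would instead need curves with arbitrary prescribed minimal syzygy degree, produced either by more elaborate arrangements or directly by the liaison construction of Theorem~\ref{T-the}. For the lemma as stated, however, one example per $d$ is all that is needed.
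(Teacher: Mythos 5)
Your proof is correct, but it takes a genuinely different route: you realize the lower bound with a different curve and at a different value of $r$. The paper takes $C = X \cup L$, a smooth curve of degree $d-1$ plus a transversal line, which has $\tau = d-1$ and $r = d-2$; the delicate part of that proof is certifying $r=d-2$, done via a diagram chase comparing the defining sequence of $E_C$ with the complete-intersection resolution of $\Sigma = X\cap L$. You instead take the irreducible curve $x^d - y^{d-1}z$, whose unique singularity is quasi-homogeneous, so that $\tau = (d-1)(d-2)$ is read off from the local ideal $(x^{d-1},y^{d-2})$ (using the Euler relation to identify $h^0(\oc_\Sigma)$ with the local Tjurina number), and the monomial form of the partials makes the evaluation of $r$ immediate: the explicit linear syzygy $yf_y-(d-1)zf_z=0$ gives $r\le 1$, and linear independence of the three monomial partials rules out $r=0$. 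Your computation of $r$ is thus essentially trivial where the paper's is the hard step. Both constructions give one example per degree, which is all the lemma asks; note, however, that your example has $(r,\tau)=(1,(d-1)(d-2))$ rather than $(d-2,\,d-1)$, so it bears on the remark following Proposition \ref{P- bds sharp}, where the author says he only knows the lower bound to be attained with $\tau=d-1$, $r=d-2$. Two small points: the squarefreeness argument is quicker than it could be (cleanest is that the dehomogenization $x^d-z$ at $y=1$ is irreducible and $y\nmid f$, so $f$ is irreducible, hence reduced), and the whole computation implicitly assumes characteristic zero, as does the paper.
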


\begin{proof} Let $C=X \cup L$ where $X$ is a smooth curve of degree $d-1$ intersecting the line $L$ transversally at $d-1$ distinct points. Clearly $\Sigma = X\cap L$ and $\tau = d-1$. It remains to show that $r=d-2$. We may assume that $L$ has equation $x=0$. Let $g =0$ be an equation of $X$, so that $C$ has equation $f=xg$. Since $\Sigma$ is the complete intersection $X\cap L$, we have $0 \to \oc (-d) \to \oc (-d+1)\oplus \oc (-1) \to \ic _\Sigma \to 0$. We have a commutative diagram:
$$\begin{array}{ccccccccc}
0 & \to & E_C(-d+1) & \to & 3.\oc (-d+1)& \to &\ic _\Sigma &\to & 0\\
 &  & \downarrow \psi &  & \downarrow \fG&  &|| & & \\
0 & \to & \oc (-d) & \to & \oc (-d+1)\oplus \oc (-1)& \to &\ic _\Sigma &\to & 0\end{array}$$
where $\fG$ is given by $M=\left( \begin{array}{ccc}
1 &0 &0\\
g_x & g_y & g_z\end{array} \right)$, the expressions of $f_x, f_y, f_z$ in function of $x,g$. We have $Ker(\fG )=Ker(\psi )=K$. We observe that $K$ is reflexive ($E_C(-d+1)$ is locally free and $Im(\fG )$ is torsion free), hence locally free. Since $\fG$, hence also $\psi$ are non zero and since $Im(\psi )$ is torsion free, we conclude that $K$ has rank one, say $K = \oc (-c)$. Clearly $c-d+1$ is the least degree of a relation between $g_y, g_z$ (look at $Ker(\fG )$ and $M$). If we take $g= x^{d-1}+h(y,z)$, $g_y=h_y, g_z=h_z$, and since $\Sigma$ is smooth, we get $c-d+1 =d-2$, hence $c=2d-3$. 

The locus where $\fG$ doesn't have rank two is defined by the $2\times 2$ minors of $M$, it is the complete intersection $(g_y)_0 \cap (g_z)_0$. Since $Coker(\psi )=Coker(\fG )$, it is also the locus where $\psi$ is not vector-bundle surjective, it is the locus where the section $\oc \to E_C(c-d+1)$ vanishes. In particular it has codimension two and we have an exact sequence $0 \to \oc \to E_C(-d+1+c) \to \ic _Z(2c-3(d-1)) \to 0$. Since $2c -3(d-1) = d-3$ and since $Z$ is a complete intersection $(d-2, d-2)$, we get $h^0(\ic _Z(2c-3(d-1))=0$, hence $h^0(E_C(-d+1+c))=1$ and this shows that $r = -d+1+c = d-2$.
\end{proof}

\begin{proposition}
\label{P- bds sharp}
For every $d \geq 3$, there exist $r$ and $\tau$ satisfying the bounds in (i) of Theorem \ref{T-dP-Wall}. 
\end{proposition}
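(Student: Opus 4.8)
The plan is to exhibit, for each $d \geq 3$, curves realizing each endpoint of the interval in Theorem \ref{T-dP-Wall}(i), namely $\tau = (d-1)(d-1-r)$ and $\tau = (d-1)(d-1-r) + r^2$.

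The lower endpoint requires no new work: Lemma \ref{L- t=(d-1)(d-1-r)} has already produced, for every $d \geq 2$, a curve $C = X \cup L$ (smooth curve plus transverse line) with $r = d-2$ and $\tau = (d-1)(d-1-r)$. So the left-hand bound of (i) is attained, and it suffices to treat the upper bound.

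For the upper endpoint I would invoke free curves. The key observation is that the maximal value permitted by the bound is attained precisely when $E_C$ splits: by Remark \ref{R-free}(i), if $E_C = \oc(-r) \oplus \oc(-d+1+r)$, then the Chern class computation gives $c_2(E_C) = (d-1)^2 - \tau = r(d-1-r)$, whence $\tau = (d-1)(d-1-r) + r^2$, exactly the right-hand bound in (i). It remains only to guarantee that such free curves exist. This is supplied by Remark \ref{R-free}(iv): for every $d \geq 3$ and each integer $r$ with $1 \leq r < d/2$, there is a free curve of exponent $(r, d-1-r)$. Taking for instance $r = 1$, valid for all $d \geq 3$, produces a curve with $\tau = (d-1)(d-2) + 1$ attaining the upper bound.

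I do not expect a genuine obstacle here, since both constructions are already in hand; the only point to check is the numerical matching of $\tau$ with the endpoint, which is immediate from the Chern class identity of Lemma \ref{L-Chern} specialized in Remark \ref{R-free}(i). The substance of the argument is the recognition that \emph{free} curves are exactly those realizing the maximal global Tjurina number allowed by Theorem \ref{T-the}, combined with their known existence for all $d \geq 3$.
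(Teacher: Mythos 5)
Your argument is correct and coincides with the paper's own proof: the lower endpoint is realized by Lemma \ref{L- t=(d-1)(d-1-r)} and the upper endpoint by the free curves of \cite{DS-expFree} via Remark \ref{R-free}. The Chern class verification you include is exactly the computation already recorded in Remark \ref{R-free}(i), so nothing further is needed.
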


\begin{proof} For the bound $(d-1)(d-r-1) \leq \tau$, this follows from Lemma \ref{L- t=(d-1)(d-1-r)}. For the bound $\tau \leq (d-1)(d-r-1) + r^2$, this follows from \cite{DS-expFree} (see Remark \ref{R-free}).
\end{proof}

\begin{remark} For the lower bound we have examples only with $\tau = d-1, r= d-2$. I don't know if other values are possible.
\end{remark}

%**************************************

\section{Further topics.}

We have the following definition (see \cite{DS-nearly}):

\begin{definition} The curve $C$ is said to be \emph{nearly free} if  we have:
$$ 0 \to S(-d+r-1) \to S(-r) \oplus 2.S(-d+r) \to H^0_*(E_C) \to 0.$$
\end{definition}

It can be shown (see \cite{Di-MaxTju} or also \cite{phe}) that this is equivalent to: $E_C(r)$ has a section vanishing at one point (so we are near to the free case where the section doesn't vanish at all).

If $\tau$ reaches the bound in Theorem \ref{T-dP-Wall} (i) or is just below, we have:

\begin{proposition}
\label{L-tau reaches bds}
If $\tau = (d-1)(d-r-1) + r^2$, then $C$ is free.\\
If $\tau = (d-1)(d-r-1) + r^2 -1$, then $C$ is nearly free.
\end{proposition}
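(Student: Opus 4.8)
The plan is to reduce Proposition~\ref{L-tau reaches bds} to the already-established Proposition~\ref{P-t max sub} by specializing the quasi-complete intersection machinery to the jacobian situation. Recall from exact sequence~(\ref{eq:C}) that the partials $f_x, f_y, f_z$ exhibit $\ic _\Sigma (d-1)$ as the image of $3.\oc$, so $\Sigma$ is a q.c.i. of type $(a,b,c) = (d-1, d-1, d-1)$ with $T = \Sigma$, $t = \tau$ and $E = E_C(d-1)$. The integer $r$ of Theorem~\ref{T-the} then coincides with the minimal degree of a syzygy between the partials, which is exactly the $r$ in the statement. With this dictionary, the quantity $r^2 + r(c-a-b)+ab$ from Theorem~\ref{T-the}(i) becomes $r^2 - r(d-1) + (d-1)^2 = (d-1)(d-r-1) + r^2$, and the value one below it is $(d-1)(d-r-1)+r^2-1$. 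So the two hypotheses on $\tau$ are literally the hypotheses $t = r^2 + r(c-a-b)+ab$ and $t = r^2+r(c-a-b)+ab-1$ of Proposition~\ref{P-t max sub}, read off under $a=b=c=d-1$.

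First I would invoke Proposition~\ref{P-t max sub}(i): under $\tau = (d-1)(d-r-1)+r^2$ it gives $E = E_C(d-1) = \oc (-r)\oplus \oc (c-a-b+r)$. Substituting $a=b=c=d-1$ yields $E_C(d-1) = \oc(-r)\oplus \oc(-(d-1)+r)$, hence $E_C = \oc(-r-(d-1))\oplus \oc(-2(d-1)+r)$ after untwisting by $d-1$ --- wait, one must untwist carefully: $E_C = E(-(d-1)) = \oc(-r-(d-1))\oplus \oc(r-(d-1))$. In any case $E_C$ splits as a direct sum of two line bundles, which is precisely the definition of $C$ being free. This disposes of the first assertion.

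For the second assertion I would invoke Proposition~\ref{P-t max sub}(ii): under $\tau = (d-1)(d-r-1)+r^2-1$ the proposition (via Lemma~\ref{L-c2E(r)=1}) produces the exact sequence
$$0 \to \oc (r+c-a-b-2) \to \oc (-r) \oplus 2.\oc (r+c-a-b-1) \to E \to 0.$$
Setting $a=b=c=d-1$ gives $c-a-b = -(d-1)$, so this reads $0 \to \oc(r-d-1) \to \oc(-r)\oplus 2.\oc(r-d) \to E_C(d-1) \to 0$. Untwisting by $d-1$ and passing to the graded modules of global sections, this becomes exactly the resolution
$$0 \to S(-d+r-1) \to S(-r)\oplus 2.S(-d+r) \to H^0_*(E_C) \to 0$$
appearing in the definition of nearly free, so $C$ is nearly free.

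The only genuine care needed --- and the step I expect to require the most attention --- is the bookkeeping of the twists: I must verify that untwisting Proposition~\ref{P-t max sub}'s conclusion by $d-1$ and translating the locally free resolution of $E_C$ into the stated minimal free resolution of the module $H^0_*(E_C)$ matches the nearly-free definition on the nose (in particular that the exponents $S(-d+r-1)$, $S(-r)$, $S(-d+r)$ come out correctly and that no extraneous splitting occurs). This is purely a matter of matching Chern-class and degree arithmetic against the definitions, so I would present it as a direct substitution rather than a separate argument; the substantive content is entirely absorbed by the earlier results.
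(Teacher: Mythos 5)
Your strategy is exactly the paper's: its entire proof reads ``Apply Proposition \ref{P-t max sub} with $a=b=c=d-1$ and $t=\tau$.'' But the twist bookkeeping --- the one step you flag as needing the most care --- is wrong as written. Setting $a=b=c=d-1$ in the sequence (\ref{eq.the}) of Theorem \ref{T-the} gives $0 \to E \to 3.\oc \to \ic _T(d-1) \to 0$, which is literally the sequence (\ref{eq:C}); hence $E = E_C$, \emph{not} $E_C(d-1)$. Your own text is inconsistent on this point: if $E$ were $E_C(d-1)$, the $r$ of Theorem \ref{T-the} (defined via $h^0(E(k))$) would differ by $d-1$ from the $r$ of Section 3 (defined via $h^0(E_C(k))$), whereas you correctly assert that they coincide. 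With the correct identification $E=E_C$ no untwisting is needed: Proposition \ref{P-t max sub}(i) gives $E_C = \oc (-r)\oplus \oc (r-d+1)$ directly, so $C$ is free; and in case (ii) the resolution specializes to $0 \to \oc (r-d-1) \to \oc (-r)\oplus 2.\oc (r-d) \to E_C \to 0$, whose exponents already agree on the nose with $S(-d+r-1)$, $S(-r)$, $2.S(-d+r)$ in the definition of nearly free (taking $H^0_*$ preserves exactness since $H^1_*(\oc (k))=0$ on $\Ptw$). Had you actually performed the untwist by $d-1$ that you describe, every exponent would shift by $d-1$ and the resulting module resolution would \emph{not} match the nearly-free definition; the match you assert at the end only holds because there is no untwist to perform. (Also, your ``careful'' recomputation $E_C=\oc (-r-(d-1))\oplus \oc (r-(d-1))$ is not the twist by $-(d-1)$ of $\oc (-r)\oplus \oc (-(d-1)+r)$; the first version you wrote was the correct untwist of your incorrect premise.) Once $E=E_C$ is fixed, the argument is complete and identical to the paper's.
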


\begin{proof} Apply Proposition \ref{P-t max sub} with $a=b=c = d-1$ and $t = \tau$.
\end{proof}

The case $r=0$ is completely settled (Lemma \ref{L-partialsDep}) and it is natural to investigate the cases where $r$ is small.

\begin{proposition}
\label{r=1}
If $r=1$ then: $\tau = (d-1)(d-2)+1$ and $C$ is free, or, $\tau = (d-1)(d-2)$ and $C$ is nearly free.
\end{proposition}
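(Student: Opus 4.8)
The plan is to treat the case $r=1$ as a direct specialization of the du Plessis--Wall inequalities, combined with the boundary classification already in hand. First I would set $a=b=c=d-1$ and $r=1$ in Theorem \ref{T-dP-Wall}(i). The lower bound becomes $(d-1)(d-2)\le\tau$, and since $(d-1)(d-r-1)+r^2=(d-1)(d-2)+1$ when $r=1$, the upper bound becomes $\tau\le(d-1)(d-2)+1$. Thus $\tau$ is trapped between two \emph{consecutive} integers.

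Consequently $\tau$ is forced to equal either $(d-1)(d-2)+1$ or $(d-1)(d-2)$, and it only remains to read off the geometry in each case from Proposition \ref{L-tau reaches bds}. If $\tau=(d-1)(d-2)+1$, then $\tau$ attains the upper bound $(d-1)(d-r-1)+r^2$, so $C$ is free; by Remark \ref{R-free}(i) the bundle then splits as $E_C=\oc(-1)\oplus\oc(-d+2)$, of exponent $(1,d-2)$. If instead $\tau=(d-1)(d-2)$, then $\tau$ equals $(d-1)(d-r-1)+r^2-1$, so the second clause of Proposition \ref{L-tau reaches bds} gives that $C$ is nearly free. As a consistency check one can compute, via Lemma \ref{L-Chern}, that $c_2(E_C(1))=(d-1)(d-2)+1-\tau$, which equals $1$ precisely in this second case; and $c_2(E_C(1))=1$ is exactly the condition that $E_C(1)$ carry a section vanishing at a single point, i.e. that $C$ be nearly free.

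I expect essentially no obstacle here: all the genuine content lives in Theorem \ref{T-the} (hence in Theorem \ref{T-dP-Wall}) and in the boundary analysis of Proposition \ref{L-tau reaches bds}, so the argument is a one-line specialization. The only point worth recording is that the hypothesis $r=1$ presupposes $1\le r\le d-1$, so $C$ is not a set of lines through a point (the $r=0$ case of Lemma \ref{L-partialsDep}); in particular $d\ge 3$, and the exponent $(1,d-2)$ is admissible. The conceptual heart of the statement is simply that for $r=1$ the du Plessis--Wall window $[\,(d-1)(d-r-1),\,(d-1)(d-r-1)+r^2\,]$ has width exactly $r^2=1$, leaving no room for any intermediate value of $\tau$.
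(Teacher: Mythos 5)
Your proof is correct and follows essentially the same route as the paper: the paper cites Proposition \ref{P-r=a-c+1}(ii) (the case $r=a-c+1$ with $a=b=c=d-1$), whose own proof is exactly your argument --- the width-one window $(d-1)(d-2)\le\tau\le(d-1)(d-2)+1$ coming from Theorem \ref{T-the}(i), followed by the dichotomy $c_2(E_C(1))\in\{0,1\}$ giving free versus nearly free --- while you route the same content through Theorem \ref{T-dP-Wall}(i) and Proposition \ref{L-tau reaches bds} instead. Either citation path is valid and the underlying mathematics is identical.
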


\begin{proof} This follows from Proposition \ref{P-r=a-c+1} (ii). 
\end{proof}

\begin{remark} The last two propositions are already known (except maybe the resolution of $\ic _\Sigma$), see \cite{Di-MaxTju}.
\end{remark}

Let us conclude with the following:

\begin{proposition}
\label{P-last}
Assume $C$ is not a set of lines through a point. Then:\\
(i) $\tau \leq d^2 -3d +3$\\
(ii) Assume $d > 7$. If $\tau > d^2 -4d +5$, then:\\
(1) $\tau = d^2 -3d +3$, $r=1$ and $C$ is free.\\
(2) $\tau = d^2 -3d +2$, $r=1$ and $C$ is nearly free.\\
(3) $\tau = d^2 -4d +7$, $r=2$ and $C$ is free.\\
(4) $\tau = d^2 -4d +6$, $r=2$ and $C$ is nearly free.
\end{proposition}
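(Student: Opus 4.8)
The plan is to read the whole statement as an optimization of the du Plessis--Wall upper bound (Theorem \ref{T-dP-Wall}) over the admissible values of $r$. Since $C$ is not a set of lines through a point, Lemma \ref{L-partialsDep} gives $r \geq 1$, and we always have $r \leq d-1$. I would write $\phi(r) := (d-1)(d-r-1) + r^2 = r^2 - (d-1)r + (d-1)^2$ for the upper bound in part (i) of Theorem \ref{T-dP-Wall}. This is a convex quadratic in $r$ with vertex at $r = (d-1)/2$, so on the range $1 \leq r \leq (d-1)/2$ it is decreasing and maximal at $r = 1$, where $\phi(1) = d^2 - 3d + 3$.

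For part (i) I would argue that no value of $r$ can push $\tau$ above $\phi(1)$. On $1 \le r \le (d-1)/2$ this is immediate from monotonicity. For $r$ in the \emph{stable} range $2r + 1 > d$ (equivalently $r > (d-1)/2$, see Remark \ref{R-E stable}), the bound (i) no longer suffices and one must instead invoke the sharper bound (ii). A direct computation rewrites that bound as $\psi(r) := -r^2 + (d-2)r + \tfrac{d(d-1)}{2}$, again a downward parabola, now with vertex at $r = d/2 - 1 < (d-1)/2$; hence $\psi$ is decreasing on the stable range and $\psi(r) \le \psi\!\big(\tfrac{d-1}{2}\big) = \tfrac{3(d-1)^2}{4}$. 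Since $\tfrac{3(d-1)^2}{4} \le d^2 - 3d + 3$ (the difference being $\tfrac{(d-3)^2}{4} \ge 0$), part (i) follows, with equality forced at $r = 1$.

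For part (ii) the same maximization under the stronger hypothesis $\tau > d^2 - 4d + 5$ and $d > 7$ should pin $r$ down to $\{1, 2\}$. The decisive numerical check is $r = 3$: here $\phi(3) = d^2 - 5d + 13$, and $\phi(3) \le d^2 - 4d + 5$ precisely when $d \ge 8$, so the hypothesis $d > 7$ is exactly what makes $r = 3$ inadmissible. For $3 \le r \le (d-1)/2$ monotonicity of $\phi$ gives $\tau \le \phi(3) \le d^2 - 4d + 5$, and for $r > (d-1)/2$ the estimate $\psi(r) \le \tfrac{3(d-1)^2}{4} \le d^2 - 4d + 5$ (valid for $d \ge 8$, where $d^2-10d+17 \ge 0$) again contradicts the hypothesis. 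Hence $r \in \{1, 2\}$. The case $r = 1$ is settled by Proposition \ref{r=1}, giving cases (1) and (2). For $r = 2$ the bounds of Theorem \ref{T-dP-Wall}(i) force $\tau \in \{d^2 - 4d + 6,\, d^2 - 4d + 7\}$ once $\tau > d^2 - 4d + 5$; applying Proposition \ref{L-tau reaches bds} with these two extremal values yields the free and nearly free conclusions of cases (3) and (4).

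I expect the only delicate point to be the bookkeeping in the stable range $2r + 1 > d$: one must be sure that it is the sharper bound (ii), not merely (i), that is used there, and that its maximum over integers $r > (d-1)/2$ really stays at or below $\tfrac{3(d-1)^2}{4}$. Everything else reduces to the monotonicity of two explicit quadratics and the observation that $d > 7$ is precisely the threshold at which $r = 3$ drops out.
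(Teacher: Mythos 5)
Your proposal is correct and follows essentially the same route as the paper: maximize the du Plessis--Wall bound $\phi(r)$ on $1\le r\le (d-1)/2$ and the sharper stable-range bound $\psi(r)$ (with the same value $3(d-1)^2/4$ at the junction), use $d>7$ to rule out $r\ge 3$, settle $r=1$ via Proposition \ref{r=1}, and settle $r=2$ via the extremal values of $\tau$. The only cosmetic difference is that for $r=2$ the paper recomputes $c_2(E_C(2))$ directly instead of citing Proposition \ref{L-tau reaches bds}, which amounts to the same computation.
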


\begin{proof} By Theorem \ref{T-dP-Wall} we have: $\tau \leq \fG (r) = r^2 -r(d-1) + (d-1)^2$, if $0 \leq r \leq (d-1)/2$ and $\tau \leq \psi (r) = -r^2 + r(d-2) + d(d-1)/2$, for $(d-1)/2 < r \leq d-1$.

The function $\fG (r)$ on $I = [0, (d-1)/2]$ is decreasing, we have $\fG (0) = (d-1)^2$, $\fG (1) = d^2 -3d +3$ and $\fG (2) = d^2 -4d +7$. The function $\psi (r)$ reaches its maximal value for $(d-2)/2$ and is decreasing on $[(d-1)/2, d-1]$. It follows that $\psi (r) < \psi (\frac{d-1}{2}) = 3(d-1)^2/4$ for $r \in I_s = ](d-1)/2, d-1]$.

Observe that $\fG (\frac{d-1}{2}) = \psi (\frac{d-1}{2}) = 3(d-1)^2/4$ (cp. Remark \ref{R-free}). 

Since $d^2 -3d + 3 \geq 3(d-1)^2/4$ for every $d \geq 2$, if $\tau > d^2 -3d +3$, we have $r \leq (d-1)/2$ and then $r=0$, which is impossible under our assumption. Hence $\tau \leq d^2 -3d +3$ and this proves (i).

(ii) Observe that $d^2 -4d + 6 \geq 3(d-1)^2/4$ if $d \geq 7$. So if $\tau \geq d^2 -4d +6$, we must have $r \leq (d-1)/2$. Moreover since $d^2 -4d +6 > \fG (3)$, if $d > 7$, we have $1 \leq r \leq 2$. If $r=1$ we conclude with Proposition \ref{r=1}, obtaining cases (1) and (2).

If $r=2$, by Theorem \ref{T-dP-Wall}: $(d-1)(d-3) \leq \tau \leq (d-1)(d-3) + 4$. We have $c_2(E_C(2)) = (d-1)(d-3) + 4 -\tau$. Hence if $\tau = d^2 -4d + 7$, $c_2(E_C(2)) =0$ and $E_C$ splits. If $\tau = d^2 -4d + 6$, then $c_2(E_C(2)) = 1$ hence $E_C(r)$ has a section vanishing along one point and $C$ is nearly free. These are cases (3) and (4).  
\end{proof}

\begin{remark} (i) The existence of free and nearly free curves with invariants as in the previous proposition follows from \cite{DS-expFree}.

(ii) For fixed $d$ the possible values of $\tau$ seem sparse. 

Let $G(d,3) = 1 +\frac{d(d-3) - 2r(3-r)}{6}$, where $d+r \equiv 0 \pmod{3}, 0 \leq r < 3$. Then it is known (\cite{GP}) that for every $g$, $0 \leq g \leq G(d,3)$, there exists a smooth, irreducible, non degenerated curve $X \subset \Pt$, of degree $d$, genus $g$. A general projection of $X$ in $\Ptw$ is a curve of degree $d$ with $\tau = (d-1)(d-2)/2 - G(d,3)$ nodes. We conclude that for $(d-1)(d-2)/2 - G(d,3) \leq \tau \leq (d-1)(d-2)/2$, there exists a curve, $C$, of degree $d$ with $\tau (C) =\tau$.

\end{remark}

%*****************************************
%*****************************************

%\printindex

\end{document}